\documentclass[12pt]{article}
\usepackage{amssymb,latexsym,amsfonts,color,amsmath,epsfig,amsthm, cite,bbm,mathrsfs, graphicx} 
\usepackage{indentfirst}

\newtheorem{thm}{Theorem}[section]

\newtheorem{definition}[thm]{Definition}
\newtheorem{lem}[thm]{Lemma}

\begin{document}

\begin{center}
\uppercase{\bf Finite Representability of Integers as $2$-Sums}

\bigskip

{\bf Anant Godbole}\\
{\it Department of Mathematics and Statistics,\\ 
East Tennessee State University,\\ Johnson City, TN 37614, USA}\\
{\tt godbolea@etsu.edu}\\ 
\vskip 10pt
{\bf Zach Higgins}\\
{\it Department of Mathematics,\\ University of California (San Diego),\\ La Jolla, CA 92093, USA}\\
\vskip10pt
{\bf Zoe Koch}\\
{\it Department of Mathematics,\\ University of Vermont,\\ Burlington, VT 05405, USA}\\
\vskip 10pt

\end{center}

\def\qed{\vbox{\hrule\hbox{\vrule\kern3pt\vbox{\kern6pt}\kern3pt\vrule}\hrule}}
\def\ms{\medskip}
\def\n{\noindent}
\def\ep{\varepsilon}
\def\G{\Gamma}
\def\lr{\left(}
\def\ls{\left[}
\def\rs{\right]}
\def\lf{\lfloor}
\def\rf{\rfloor}
\def\lg{{\rm lg}}
\def\lc{\left\{}
\def\rc{\right\}}
\def\rr{\right)}
\def\ph{\varphi}
\def\p{\mathbb P}
\def\P{\rm Po}
\def\nk{n \choose k}
\def\ca{{\cal A}}
\def\s{\cal S}
\def\e{\mathbb E}
\def\tv{{d_{\rm TV}}}
\def\cl{{\cal L}}
\def\a{\alpha}
\def\l{\lambda}
\begin{abstract}
A set $\ca=\ca_{k,n}\subset[n]\cup\{0\}$ is said to be an additive $h$-basis if each element in $\{0,1,\ldots,hn\}$ can be written as an $h$-sum of elements of $\ca$ in {\it at least} one way.  We seek multiple representations as $h$-sums,  and, in this paper we make a start by restricting ourselves to $h=2$.  We say that $\ca$ is said to be a truncated $(\alpha,2,g)$ additive basis if 
each $j\in[\alpha n, (2-\alpha)n]$ can be represented as a $2$-sum of elements of $\ca_{k,n}$ in at least $g$ ways.  In this  paper, we provide sharp asymptotics for the event that a randomly selected set is a truncated $(\alpha,2,g)$ additive basis with high or low probability.
\end{abstract}


\section{Introduction}   
\subsection{Balls in Boxes}We start by introducing results from the classical theory of the random allocation of balls to boxes.  We will be seeing, in the rest of the paper, how and to what extent the results apply to situations such as coverage of integers by $h$-sets of integers.  

Suppose that we are trying to ``pack" balls in boxes so that each box contains at most one ball.  This is the so-called ``birthday problem", and it is well-known, e.g.,  \cite{bh} that if we randomly throw $n$ balls into $N$ boxes, then the threshold for the property to hold with high or low probability (whp or wlp) is $n=\sqrt{N}$, in the sense that if $n/\sqrt{N}\to\infty$, then the probability that each box contains at most one ball is asymptotically 0, and this probability is asymptotically 1 if $n/\sqrt{N}\to0$.  Here and throughtout this paper, we will describe these two situations by using the notation $n\gg\sqrt{N}$ and $n\ll\sqrt{N}$ respectively.  There is a generalization of the birthday threshold to to ``at most $g$ balls, which we rederived in \cite{gghk} using Talagrand's inequality \cite{as}:
\begin{thm}  When $n$ balls are randomly and uniformly distributed in $N$ boxes, then letting $X=X_g$ denote the number of boxes with $g+1$ or more balls,

$$n\ll N^{g/(g+1)}\Rightarrow \p(X=0)\to1,$$
and
$$n\gg N^{g/(g+1)}\Rightarrow \p(X=0)\to0.$$
\end{thm}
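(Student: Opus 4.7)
The plan is to apply the first- and second-moment methods (with Talagrand's inequality as a concentration tool). Write $X_g=\sum_{i=1}^N Y_i$, where $Y_i=\mathbf{1}[\text{box } i \text{ receives at least } g+1\text{ balls}]$. The count in box $i$ is $\mathrm{Bin}(n,1/N)$, so linearity of expectation together with a routine binomial estimate yields
\[\e[X_g]=N\cdot\p(\mathrm{Bin}(n,1/N)\geq g+1)\sim N\binom{n}{g+1}N^{-(g+1)}\sim\frac{n^{g+1}}{(g+1)!\,N^g}.\]
The threshold $n=N^{g/(g+1)}$ is exactly where $\e[X_g]$ passes through a constant, which already suggests where the phase transition lies.

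For the first implication, when $n\ll N^{g/(g+1)}$ the asymptotic above forces $\e[X_g]\to0$, so Markov's inequality immediately gives $\p(X_g\geq 1)\leq\e[X_g]\to 0$, i.e.\ $\p(X_g=0)\to 1$.

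For the second implication we have $\e[X_g]\to\infty$, and I would follow the Talagrand route taken in \cite{gghk}. View $X_g$ as a function of $n$ independent random variables (one per ball, recording its destination box). Moving a single ball alters $X_g$ by at most $1$, since the donor box can fall out of the count and the recipient box can enter it, so $X_g$ is $1$-Lipschitz. The event $\{X_g\geq r\}$ is certifiable by exhibiting $r(g+1)$ balls, namely any $g+1$ balls in each of $r$ witnessing boxes. Talagrand's inequality (see \cite{as}) then yields a bound of the form $\p(X_g=0)\p(X_g\geq M)\leq\exp(-cM/(g+1))$, where $M$ is the median of $X_g$. A standard comparison (using the Lipschitz bound to keep $M$ close to $\e[X_g]$) forces $M\to\infty$, and the concentration estimate delivers $\p(X_g=0)\to 0$.

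The main obstacle is the lower-tail direction. The expectation computation is essentially mechanical once one notes that $n/N\to 0$ in both regimes (since $N^{g/(g+1)}/N=N^{-1/(g+1)}\to 0$), so the Poisson approximation to $\mathrm{Bin}(n,1/N)$ is accurate. The delicate step is concentration: the $Y_i$ are negatively correlated but not independent, so a naive second-moment bound would require careful control of $\e[Y_iY_j]-\e[Y_i]\e[Y_j]$. Talagrand's inequality bypasses this arithmetic, but one still must verify the Lipschitz and certifiability hypotheses precisely and then argue that the median $M(X_g)$ tracks $\e[X_g]$ to conclude.
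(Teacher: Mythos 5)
Your route---first moment for $n\ll N^{g/(g+1)}$, Talagrand's inequality for $n\gg N^{g/(g+1)}$---is essentially the derivation the paper points to: Theorem 1.1 is not proved in this paper but is attributed to a Talagrand-based rederivation in \cite{gghk}, and your Lipschitz check (moving one ball changes $X_g$ by at most $1$) and certifiability check ($r(g+1)$ balls witness $X_g\ge r$) are both correct.

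One genuine slip to repair: your claim that $n/N\to 0$ ``in both regimes'' is false. The hypothesis $n\gg N^{g/(g+1)}$ puts no upper bound on $n$, so $n$ may be of order $N$ or much larger; in that case the asymptotic $\mathbb{E}[X_g]\sim n^{g+1}/\bigl((g+1)!\,N^g\bigr)$, which requires $(1-1/N)^{n}\to 1$, i.e.\ $n=o(N)$, breaks down, and with it your stated justification that $\mathbb{E}[X_g]\to\infty$ and that the median grows. The standard repair is monotonicity: the event $\{X_g=0\}$ (every box holds at most $g$ balls) is decreasing in the number of balls thrown, so it suffices to prove the second implication for $n$ just above the threshold, say $n=N^{g/(g+1)}\omega(N)$ with $\omega$ growing slowly enough that $n=o(N)$; for larger $n$ the probability $\mathbb{P}(X_g=0)$ is only smaller. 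With that one-line truncation your argument goes through. (As an aside, a plain second-moment/Chebyshev bound also suffices for the lower-tail direction, since the box indicators are negatively correlated, so the variance is at most the mean; Talagrand is convenient but not essential.)
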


Theorem 1.1 exhibits a progression of thresholds, which get close to $n=N$ as $g\to\infty$.
  It may still be the case, however, that not all boxes will have a ball in them if $n\gg N$, which leads us to the question of the coverage of each box by at least one ball, or ``coupon collection".  It is well known that the expected waiting time $\e(W)$ for each of the boxes to be filled is $N(\ln N+\gamma+o(1)),$ where $\gamma$ is Euler's constant, and that the variance of the waiting time is $\sim N^2$.  Together with these facts, Chebychev's inequality can be use to prove, with $X$ denoting the number of empty boxes, and $\omega(1)$ an arbitrary function tending to $\infty$, that
\begin{thm}
$$n=N(\ln N+\omega(1))\Rightarrow \p(W\ge n)=\p(X\ge1)\to0,$$
and
$$n=N(\ln N-\omega(1))\Rightarrow \p(W\le n)=\p(X=0)\to0.$$
\end{thm}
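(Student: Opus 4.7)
The plan is to apply Chebyshev's inequality directly to the waiting time $W$, using the mean and variance stated in the paragraph preceding Theorem 1.2.

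The key observation is definitional: $W$ is the first time at which every box contains a ball, so $\{W\ge n\}$ coincides with the event that at least one box is still empty after $n$ balls have been thrown, i.e.\ $\{X\ge 1\}$; likewise $\{W\le n\}=\{X=0\}$. Thus it suffices to bound the two tails of $W$.

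Next I would record that $W=\sum_{i=1}^{N}G_i$ is a sum of independent geometric random variables, where $G_i$ counts the number of throws needed to obtain a new coupon after $i-1$ distinct ones have already appeared, so $G_i$ has success probability $p_i=(N-i+1)/N$. From this,
$$\e(W)=N\sum_{i=1}^{N}\frac{1}{i}=N(\ln N+\gamma+o(1)),\quad \text{Var}(W)=\sum_{i=1}^{N}\frac{1-p_i}{p_i^{2}}\sim \frac{\pi^{2}}{6}N^{2},$$
confirming the quoted estimates.

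Then for the first statement, with $n=N(\ln N+\omega(1))$, we have $n-\e(W)=N(\omega(1)-\gamma+o(1))$, and Chebyshev gives
$$\p(W\ge n)\le\p(|W-\e(W)|\ge n-\e(W))\le\frac{\text{Var}(W)}{(n-\e(W))^{2}}=O\!\lr\frac{1}{\omega(1)^{2}}\rr\to 0.$$
The second statement is symmetric: with $n=N(\ln N-\omega(1))$ we have $\e(W)-n\sim N\omega(1)$, and the same Chebyshev bound applies to $\p(W\le n)$. No step presents a real obstacle; the only mildly technical point is the $\text{Var}(W)\sim CN^{2}$ estimate, which reduces to a routine sum-of-reciprocal-squares computation and is already quoted in the statement preceding the theorem.
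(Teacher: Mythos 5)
Your proposal is correct and follows essentially the same route the paper indicates: translate the events $\{X\ge1\}$, $\{X=0\}$ into tail events for the waiting time $W$ and apply Chebyshev's inequality with $\e(W)=N(\ln N+\gamma+o(1))$ and $\mathrm{Var}(W)=\Theta(N^2)$, which you additionally justify via the standard decomposition of $W$ into independent geometric waiting times. The only cosmetic point is the off-by-one between $\{W>n\}$ and $\{W\ge n\}$, which is present in the paper's own statement and is asymptotically immaterial.
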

Various people have asked about covering each box $g$ or more times.
Generalizing work of Erd\H os and R\'enyi; and Newman and Shepp; Holst produced the following definitive result:
\begin{thm}  Letting $V_g$ denote the waiting time until each box has at least $g$ balls, we have $$\e(V_g)=N(\ln N+(g-1)
\ln \ln N+\gamma-\ln (g-1)!+o(1)).$$Normalizing by setting
$V^*_g=V_g/N-\ln N-(g-1)\ln\ln N+\ln(g-1)!+o(1)$, we have that
$V_2,\ldots, V_g$ are asymptotically independent.  Moreover $$\p(V^*_g\le u)\to\exp\{-e^{-u}\}.$$  
\end{thm}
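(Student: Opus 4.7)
The plan is to reduce the tail event $\{V_g > n\}$ to an occupancy count: letting $X_g^{(n)}$ denote the number of boxes holding strictly fewer than $g$ balls after $n$ throws, one has $\mathbb{P}(V_g \le n) = \mathbb{P}(X_g^{(n)} = 0)$. The strategy is therefore to prove a Poisson limit law for $X_g^{(n)}$ along the correct scaling of $n$, from which both the Gumbel distribution of $V_g^*$ and the expectation formula will drop out.

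For step one, fix $u \in \mathbb{R}$ and take $n = n(u) = N\bigl(\ln N + (g-1)\ln\ln N + u - \ln(g-1)! + o(1)\bigr)$. Using the standard Poissonization trick—replace the deterministic budget of $n$ balls by a Poisson$(n)$ arrival process, so that the box counts are i.i.d.\ Poisson$(n/N)$—the probability that a given box is deficient is
\[
p_N \;=\; e^{-n/N}\sum_{k=0}^{g-1}\frac{(n/N)^k}{k!} \;\sim\; e^{-n/N}\,\frac{(n/N)^{g-1}}{(g-1)!}.
\]
A direct computation shows the chosen normalization was engineered so that $N p_N \to e^{-u}$, hence $\mathbb{E}[X_g^{(n)}] \to e^{-u}$. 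Under Poissonization, the deficiency indicators across distinct boxes are independent, so $X_g^{(n)} \sim \mathrm{Bin}(N,p_N)$ exactly and therefore $X_g^{(n)} \Rightarrow \mathrm{Po}(e^{-u})$, giving $\mathbb{P}(V_g^* \le u) \to \exp\{-e^{-u}\}$. Depoissonization is harmless because the Poisson$(n)$ arrival count deviates from $n$ by only $O(\sqrt{n})$ with high probability, a shift of smaller order than the $N\ln\ln N$ scale that governs the transition. The expectation formula for $\mathbb{E}(V_g)$ then follows by integrating the tail probabilities, with uniform integrability supplied by crude moment bounds on $X_g^{(n)}$.

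For the joint asymptotic independence of $V_2,\ldots,V_g$, I would evaluate at a vector of times $n_j = n_j(u_j)$, one for each index $j$, and study the joint distribution of $\bigl(X_2^{(n_2)},\ldots,X_g^{(n_g)}\bigr)$. The heuristic is that the times $n_2 < n_3 < \cdots < n_g$ are separated by factors of order $N\ln\ln N$, so the snapshots at different $n_j$ carry essentially orthogonal information. Rigorously, I would compute multivariate factorial moments and show they factor into products of Poisson factorial moments, again most cleanly under Poissonization where the arrival processes at disjoint time intervals are independent.

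The main obstacle I foresee is precisely this last step. On a single realization, the events ``box $i$ has fewer than $j$ balls at time $n_j$'' and ``box $i$ has fewer than $j'$ balls at time $n_{j'}$'' are strongly positively correlated, so asymptotic independence must emerge from a delicate cancellation in the inclusion–exclusion as the time gap widens. Controlling the overlap terms in the multivariate expansion—and verifying that the non-leading contributions vanish uniformly in the $u_j$ on compact sets—is, in my view, the technical heart of Holst's theorem, and would require careful bookkeeping of the Poisson tail probabilities at the different scales $n_j/N = \ln N + (j-1)\ln\ln N + O(1)$.
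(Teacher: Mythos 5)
First, a point of comparison: the paper does not prove this statement at all — Theorem 1.3 is quoted as Holst's classical result (generalizing Erd\H os--R\'enyi and Newman--Shepp), so there is no in-paper argument to measure yours against, and I can only judge the proposal against the standard proof. Your treatment of the marginal law is essentially that standard route and is sound: Poissonize so the box counts become i.i.d.\ Poisson$(n/N)$, compute $p_N\sim e^{-n/N}(n/N)^{g-1}/(g-1)!$ so that the chosen centering gives $Np_N\to e^{-u}$, conclude $\p(X_g^{(n)}=0)\to\exp\{-e^{-u}\}$, and depoissonize by monotonicity. (Minor quibble: the relevant comparison is $\sqrt{n}=O(\sqrt{N\ln N})=o(N)$, since $N$ — not $N\ln\ln N$ — is the width of the window in $u$; the conclusion is unaffected.) Passing to $\e(V_g)$ needs two-sided tail control of $V_g^*$, e.g.\ Markov's inequality via $\e(X_g^{(n)})$ for the upper tail and $(1-p_N)^N\le e^{-Np_N}$ for the lower tail, rather than ``crude moment bounds on $X_g^{(n)}$,'' but that is routine.

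The genuine gap is the asymptotic independence of $V_2,\ldots,V_g$: you explicitly leave it as an obstacle, so the proposal as written proves only the one-dimensional limit and the mean, not the full statement. Moreover, the missing step is simpler than the ``delicate cancellation in the inclusion--exclusion'' you anticipate. Under Poissonization the boxes are independent, so with $t_j=N(\ln N+(j-1)\ln\ln N+u_j-\ln(j-1)!)$ and $N_i(t)$ denoting the count in box $i$ at time $t$,
\[
\p(V_2\le t_2,\ldots,V_g\le t_g)=\prod_{i=1}^{N}\Bigl(1-\p\bigl(\exists\, j:\ N_i(t_j)\le j-1\bigr)\Bigr),
\]
and per box the union bound gives $\sum_j\p(N_i(t_j)\le j-1)\sim\sum_j e^{-u_j}/N$, while for $j<j'$ the intersection forces the Poisson increment of box $i$ over $(t_j,t_{j'}]$ — which is independent of $N_i(t_j)$ and has mean $(j'-j)\ln\ln N+O(1)$ — to be at most $j'-1$, an event of probability $O\bigl((\ln\ln N)^{j'-1}(\ln N)^{-(j'-j)}\bigr)=o(1)$; hence each pairwise intersection is $o(1/N)$ and Bonferroni yields $\p(\exists\, j)=(1+o(1))\sum_j e^{-u_j}/N$ uniformly for the $u_j$ in compact sets. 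The product then converges to $\exp\{-\sum_j e^{-u_j}\}=\prod_j\exp\{-e^{-u_j}\}$, which is exactly the claimed asymptotic independence. So no multivariate factorial-moment machinery is needed — only this single pairwise Poisson tail estimate exploiting the $\ln\ln N$ spacing — but without it (or something equivalent) that part of the theorem remains unproved in your write-up.
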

From Theorem 1.3, it is easy to derive the following result
\begin{thm}
$$n=N(\ln N+(g-1)\ln\ln N+\omega(1))\Rightarrow \p(X_g=0)\to1,$$
and
$$n=N(\ln N+(g-1)\ln\ln N-\omega(1))\Rightarrow \p(X_g=0)\to0,$$
where $X_g$ is the number of boxes with $\le g-1$ balls.
\end{thm}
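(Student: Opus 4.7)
The plan is to exploit the duality between the coverage waiting time $V_g$ from Theorem 1.3 and the counting variable $X_g$: after $n$ throws, the event that every box contains at least $g$ balls is precisely $\{V_g \le n\}$, which in turn equals $\{X_g = 0\}$. This reduces both halves of the theorem to the single identity $\p(X_g = 0) = \p(V_g \le n)$, after which one just feeds the hypothesized value of $n$ into the limit law of Theorem 1.3.

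Carrying this out, I would rewrite $\{V_g \le n\}$ in terms of the centered/scaled variable $V^*_g = V_g/N - \ln N - (g-1)\ln\ln N + \ln(g-1)! + o(1)$, obtaining
$$\p(X_g = 0) \;=\; \p\!\left(V^*_g \le \frac{n}{N} - \ln N - (g-1)\ln\ln N + \ln(g-1)! + o(1)\right).$$
In the first regime $n = N(\ln N + (g-1)\ln\ln N + \omega(1))$, the argument on the right simplifies to $\omega(1) + \ln(g-1)! + o(1) \to +\infty$; in the second regime $n = N(\ln N + (g-1)\ln\ln N - \omega(1))$, it simplifies to $-\omega(1) + \ln(g-1)! + o(1) \to -\infty$, since the constant $\ln(g-1)!$ is dwarfed by $\omega(1)$.

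Finally, I would invoke the Gumbel limit $\p(V^*_g \le u) \to \exp\{-e^{-u}\}$ from Theorem 1.3. Since $\exp\{-e^{-u}\} \to 1$ as $u \to +\infty$ and $\exp\{-e^{-u}\} \to 0$ as $u \to -\infty$, the two conclusions $\p(X_g = 0)\to 1$ and $\p(X_g = 0)\to 0$ follow at once. There is no real obstacle in this argument: the entire content is the translation between the waiting-time and occupancy viewpoints, plus a book-keeping check that the deterministic constants $\ln(g-1)!$ and the $o(1)$ term are swamped by the arbitrary function $\omega(1) \to \infty$, which is immediate from the definition of $\omega(1)$. This is exactly why the authors flag the deduction as ``easy.''
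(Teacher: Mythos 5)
Your proposal is correct and matches the paper's intended argument: the paper gives no explicit proof, simply asserting that Theorem 1.4 is "easy to derive" from Holst's result (Theorem 1.3), and the derivation it has in mind is precisely your duality $\{X_g=0\}=\{V_g\le n\}$ combined with the Gumbel limit for $V^*_g$, with the constant $\ln(g-1)!$ and the $o(1)$ term absorbed by $\omega(1)$. The only point worth making explicit is the routine monotonicity step converting the fixed-$u$ limit $\p(V^*_g\le u)\to\exp\{-e^{-u}\}$ into the statements for arguments tending to $\pm\infty$, which you use implicitly and which is standard.
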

Of particular note is the linearity (in $\ln\ln N$) for coverings beyond the first, showing that an additional iterated logarithmic fraction suffices for each subsequent covering (which are asymptotically independent!)  We hope to show that many of these features stay intact even as dependence is introduced into the covering scenarios.  As a final note, we observe that extremal behaviour in the ``balls in boxes" example is trivial:  The maximal number of balls that may be placed in $N$ boxes so that each contains at most one box is $N$, as is the smallest number of balls so as to guarantee at least one ball per box.
 \subsection {Dependence}
A set $\ca\subseteq[n]$ is said to be a $B_h$ set (the totality of these for all $h\ge 2$ are known as Sidon sets) if each of the ${{\vert\ca\vert+h-1}\choose {h}}$ sums of $h$ 
elements drawn with replacement from $\ca$ are distinct.  A set $\ca\subseteq[n]\cup\{0\}$ is said to be an $h$-additive basis if each $j\in [n]$ can be written as the sum of $h$ elements in $\ca$.  Thus, a set is $h$-Sidon or an $h$-additive basis if each element in the potential sumset can be obtained in at most one or at least one way using elements of $\ca$.  It is known that maximal Sidon sets and minimal additive bases are both of order $n^{1/h}$; for example minimal 2-additive bases have size $1.463\sqrt{n}\le\vert\ca\vert\le1.871\sqrt{n}$. See \cite{gjlr} and \cite{gllt} for details on such results regarding Sidon sets and additive bases respectively.  
We are interested, however, in random versions of these results, and we start by noting that the corresponding balls in boxes model is as follows:

The balls are the integers randomly chosen from $[n]$.  However they do not ``go into a single box".  Rather, each ball colludes with other chosen balls, including itself, generating sums with multisets of $h-1$ other balls.  A ball is then placed into the box corresponding to each generated sumset.  For example, if $h=3$ and the balls drawn in sequence are 4, 2, and 6, then balls are placed in boxes $$12(=4+4+4),$$$$6(=2+2+2), 8(=2+2+4), 10(=2+4+4),$$ $$18(6+6+6), 12 (6+2+4), 10 (6+2+2), 14(6+4+4), 14(6+6+2), 16(6+6+4)$$ where the numbers in the three lines indicate what occurs with balls 4, 2, and 6 respectively.  There are clearly several layers of dependence in the allocation of balls to boxes.

Three known facts in the area of thresholds for the emergence of Sidon sets and additive bases are stated next:

\begin{thm} (\cite{gjlr}) Consider a subset $A_n$ of random size obtained by choosing each integer in $[n]$ independently with probability $p=p_n=\frac{k_n}{n}$.
Then for any $h\ge2$,
$$k_n=o(n^{1/2h})\Rightarrow\p(A_n\ {\rm is\ }
B_h)\to1\quad(n\to\infty)$$
and
$$n^{1/2h}=o(k_n)\Rightarrow\p(A_n\ {\rm is\ }
B_h)\to0\quad(n\to\infty).$$  
\end{thm}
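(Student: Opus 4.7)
The plan is to recast ``$A_n$ is $B_h$'' as the event $\{X=0\}$, where $X$ counts the \emph{collisions}
$$
X \;=\; \sum_{(S_1,S_2)} \mathbbm{1}\!\left[S_1\cup S_2\subseteq A_n\right],
$$
the sum ranging over unordered pairs of distinct $h$-multisets $S_1,S_2$ drawn from $[n]$ with equal sum. Writing $U=S_1\cup S_2$, the indicator has probability $p^{|U|}=(k_n/n)^{|U|}$, and $|U|$ ranges from $h+1$ (when $S_1$ and $S_2$ share $h-1$ elements and differ in one) up to $2h$.

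For the first direction ($k_n=o(n^{1/2h})$), the plan is Markov's inequality. I would bound $\e[X]$ by grouping collisions according to $m=|U|$. The key counting step is that for each $m$, the number of collision patterns with $|U|=m$ is $O(n^{m-1})$: pick any $m-1$ of the $m$ distinct values freely in $[n]$, and the linear relation $\sum S_1=\sum S_2$ together with the multiset structure determines the remaining value up to finitely many (multiplicity and pairing) choices. Thus
$$
\e[X]=O\!\left(\sum_{m=h+1}^{2h} n^{m-1}(k_n/n)^m\right)=O\!\left(\sum_m \frac{k_n^m}{n}\right),
$$
which is dominated by the $m=2h$ term $k_n^{2h}/n$. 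Under $k_n=o(n^{1/2h})$ this tends to $0$, so $\p(X\ge 1)\to 0$.

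For the second direction ($n^{1/2h}=o(k_n)$), I would apply a second-moment / Chebyshev argument, restricting $X$ to collisions with $|U|=2h$ (i.e., $S_1,S_2$ disjoint and consisting of $2h$ distinct integers summing in matched pairs). A standard lattice-point count gives the matching lower bound $\e[X]\gtrsim n^{2h-1}(k_n/n)^{2h}=k_n^{2h}/n\to\infty$. For the variance, I would expand
$$
\mathrm{Var}(X)=\sum_{(S_1,S_2),(S_1',S_2')}\bigl(p^{|U\cup U'|}-p^{|U|+|U'|}\bigr),
$$
and observe that only overlapping pairs $(|U\cap U'|\ge 1)$ contribute. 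Grouping by $t=|U\cap U'|\ge 1$, the number of overlapping pairs is $O(n^{4h-1-t})$, so the contribution is $O(n^{4h-1-t}(k_n/n)^{4h-t})=O(k_n^{4h-t}/n^{t+1})$, giving $\mathrm{Var}(X)/\e[X]^2=O\!\left(\sum_{t\ge 1} n^{t-1}/k_n^t\right)=o(1)$ under $k_n\gg n^{1/2h}$. Chebyshev then yields $\p(X=0)\to 0$.

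The main obstacles are the two counting lemmas: the upper bound on the number of collision patterns with prescribed $|U|$ (needing a clean accounting of how the linear relation $\sum S_1=\sum S_2$ constrains one coordinate), and the overlap count controlling $\mathrm{Var}(X)$, where one must verify that pairs of collisions sharing $t$ elements really span only $4h-t$ ``free'' choices after imposing both linear constraints. Everything else — conversion to $B_h$-ness, Markov, and Chebyshev — is routine.
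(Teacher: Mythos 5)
You should note first that the paper contains no proof of this statement at all: Theorem 1.5 is quoted from \cite{gjlr}, so the only comparison available is with the general first/second-moment (Poisson-paradigm) machinery the paper uses for its own Theorem 1.9 --- and your outline (Markov for the subcritical direction, Chebyshev on disjoint collisions for the supercritical one) is indeed the standard route, in the same spirit. One small slip in the first direction: the minimum of $|U|$ is not $h+1$, and the parenthetical justification (``share $h-1$ elements and differ in one'') describes an impossible configuration, since equal sums would force the two differing elements to coincide; for $h\ge 3$ one can have $|U|=3$, e.g.\ $S_1=\{a,a,b\}$, $S_2=\{a,c,c\}$ with $a+b=2c$. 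This is harmless: every term $n^{m-1}(k_n/n)^m=k_n^m/n$ with $3\le m\le 2h$ still tends to $0$ when $k_n=o(n^{1/2h})$, so the Markov step survives.

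The genuine gap is in the variance step. Your overlap count $O(n^{4h-1-t})$ imposes only one of the two sum constraints, and the algebra that follows is inconsistent with it: $n^{4h-1-t}(k_n/n)^{4h-t}=k_n^{4h-t}/n$, not $k_n^{4h-t}/n^{t+1}$, and dividing by $\mathbb{E}[X]^2\asymp k_n^{4h}/n^2$ gives $n/k_n^t$, whose $t=1$ term $n/k_n$ does not tend to $0$ under $k_n\gg n^{1/2h}$. Moreover the final displayed bound $\sum_{t\ge1}n^{t-1}/k_n^t$ is itself not $o(1)$ in the stated regime: its $t=2$ term is $n/k_n^2$, which can be of order nearly $n^{1-1/h}$. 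The repair is exactly the point you flag but do not carry out: both linear constraints must be used. For $1\le t\le 2h-1$, after fixing $(S_1,S_2)$ (at most $O(n^{2h-1})$ choices) the second collision has $2h-t$ new values, each entering $\sum S_1'=\sum S_2'$ with coefficient $\pm1$, so one of them is determined by the rest; this gives $O(n^{4h-t-2})$ overlapping pairs, contribution $O(k_n^{4h-t}/n^2)$, and ratio $O(k_n^{-t})\to0$ since $k_n\to\infty$. The $t=2h$ (diagonal-type) terms contribute $O(n^{2h-1}p^{2h})=O(\mathbb{E}[X])$, hence ratio $O(1/\mathbb{E}[X])\to0$. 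With that corrected count, Chebyshev does give $\mathbb{P}(X=0)\to0$, and your argument is complete; for comparison, \cite{gjlr} in fact proves the sharper Poisson-limit behaviour at the threshold, much as the present paper does for Theorem 1.9 via the Stein--Chen method.
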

In \cite{gghk}, we find the following definition that is related to the original question of Sidon (see \cite{o} and also the second open question in Section 3 below).
\begin{definition}
We say that $\ca\subseteq[n]$ satisfies the $B_h[g]$ property for integers $h\ge2; g\ge 1$ if for all integers $k, h\le k\le nh$, $k$ is realized in at most $g$ ways as a sum
$$a_1+a_2+\ldots+a_h=k$$ for $a_1\le a_2\ldots\le a_h$ and $a_i\in\ca$ for each $i$.
\end{definition}
The authors of \cite{gghk} go on to generalize Theorem 1.5 as follows:
\begin{thm}
Let $\ca\subseteq [n]$ be a random subset of $[n]$ in which each element of $[n]$ is selected for membership in $\ca$ independently with probability $p:=\frac{k}{n}$. Then for any $h \geq 2$, $g \geq 1$ we have:

\[
k = o\left ( n^{\frac{g}{h(g+1)}} \right ) \Rightarrow \p(\ca \text{ is } B_h[g]) \to1\quad(n\to\infty),
\]
and
\[ 
n^{\frac{g}{h(g+1)}} = o\left ( k \right ) \Rightarrow \p(\ca \text{ is } B_h[g]) \to0\quad(n\to\infty).\]
\end{thm}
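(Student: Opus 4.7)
The plan is to adapt the standard first-and-second-moment argument used for Theorem 1.5 to accommodate the threshold for having at most $g$ collisions. Let $Y=Y(\ca)$ count the number of unordered $(g+1)$-element families $\{S_1,\ldots,S_{g+1}\}$ of pairwise distinct non-decreasing $h$-multisets with entries in $\ca$ for which $\sum_{a\in S_i}a$ takes the same value for every $i$. The event $\{Y\geq 1\}$ is exactly the event that $\ca$ fails the $B_h[g]$ property, so it suffices to determine when this probability tends to $0$ versus $1$.

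First, I would organize $\e[Y]$ by the size $\ell=|\mathrm{supp}(S_1)\cup\cdots\cup\mathrm{supp}(S_{g+1})|$ of the union of supports. The dominant term comes from $\ell=h(g+1)$, in which every $S_i$ is a genuine $h$-subset (no repeated entries) and the $S_i$ are pairwise disjoint. For a fixed target sum $m$, the number of $h$-subsets of $[n]$ summing to $m$ is a representation function $r(m)$ of typical order $n^{h-1}$, and a standard convexity estimate gives $\sum_m r(m)^{g+1}=\Theta(n^{h(g+1)-g})$ (each additional ``same-sum'' constraint costs one factor of $n$). Each disjoint configuration contributes $p^{h(g+1)}=k^{h(g+1)}/n^{h(g+1)}$, yielding
\[
\e[Y]=\Theta\!\left(\frac{k^{h(g+1)}}{n^{g}}\right).
\]
Configurations with $\ell<h(g+1)$ (repeated entries inside some $S_i$, or shared entries between different $S_i$) have count reduced by a factor $n^{-s}$ and probability weight enhanced by $p^{-s}$ for each of the $s=h(g+1)-\ell$ missing distinct elements, so their contribution is $O(\e[Y]\cdot k^{-s})$; this is lower order since $k\to\infty$ in both threshold regimes. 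The 1-statement then follows from Markov's inequality.

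For the 0-statement, I would apply the Paley--Zygmund inequality, which reduces the task to showing $\e[Y^2]\leq(1+o(1))\e[Y]^2$. Expanding $\e[Y^2]$ as a sum over ordered pairs $(T,T')$ of valid families, decompose by the number $s$ of elements shared between the supports of $T$ and $T'$. The $s=0$ contribution reproduces $\e[Y]^2$ up to a negligible boundary adjustment; for $s\geq 1$, each additional shared element loses a factor of $n$ in the combinatorial count but gains a factor of $p^{-1}$ in probability weight, so the total $s$-contribution is $O(\e[Y]^2\cdot(np)^{-s})=O(\e[Y]^2\cdot k^{-s})$. Since $k\gg n^{g/(h(g+1))}$ forces $k\to\infty$, summing over $s\geq 1$ yields $o(\e[Y]^2)$, as required.

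The principal obstacle is the overlap bookkeeping inside the second moment: one must classify all intersection patterns between two $(g+1)$-families---which individual multisets in each family share which elements with multisets in the other family, and whether the two common sums happen to coincide---and verify in each case that the count decays at the anticipated rate $n^{-s}$. The ``same common sum'' constraint is the most delicate point, because different overlap patterns produce different numbers of equality constraints among the parameters; a careful case split patterned on the $g=1$ Sidon argument of \cite{gjlr} should handle this, together with the representation-function estimate $\sum_m r(m)^{g+1}=\Theta(n^{h(g+1)-g})$ that already underlies the first-moment computation.
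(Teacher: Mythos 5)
The paper itself contains no proof of this statement: Theorem 1.6 is quoted from \cite{gghk}, so there is no in-paper argument to compare yours against, and I can only assess your outline on its own terms. Your overall plan is the natural one and gets the threshold exponent right: count $(g+1)$-families of distinct equal-sum $h$-multisets, show $\e [Y]=\Theta(k^{h(g+1)}/n^{g})$ using $\sum_m r(m)^{g+1}=\Theta(n^{h(g+1)-g})$, deduce the $1$-statement from Markov and the $0$-statement from a second-moment/Paley--Zygmund bound. But the central bookkeeping claim you rely on in both moments --- that each missing (within a family) or shared (between two families) element costs a factor $n$ in the count while gaining $p^{-1}$ in probability, so every such pattern contributes $O(\e[Y]\,k^{-s})$, resp.\ $O(\e[Y]^2k^{-s})$ --- is false in general, because the equal-sum constraints can become linearly dependent, and then the count drops by fewer than $s$ factors of $n$. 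Two concrete failures: (i) within a single family, for $h\ge3$ the rank of the $g$ equations can fall below $g$; e.g.\ for $h=4$, $g=3$ the four distinct sets $\{a,b,c,d\},\{a,b,e,f\},\{c,d,x,y\},\{e,f,x,y\}$ satisfy all equal-sum conditions with only two independent equations ($c+d=e+f$ and $a+b=x+y$), so with eight distinct elements the count is of order $n^{6}$, not the $n^{8-3}=n^{5}$ your rule predicts; (ii) in the second moment, if $T'$ shares $g$ of its $g+1$ multisets with $T$, the contribution is $\e[Y]\cdot\Theta(n^{h-1}p^{h})=\e[Y]^2\cdot\Theta\bigl(n^{\,g-1}k^{-hg}\bigr)$, which exceeds your claimed $\e[Y]^2k^{-s}$ (here $s=hg$) by a factor $n^{g-1}$, a genuine discrepancy for every $g\ge2$.

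In both examples the theorem survives --- at $k\asymp n^{g/(h(g+1))}$ one checks $g\,V\le c\,h(g+1)$ pattern by pattern, where $V$ is the number of distinct elements used and $c$ the rank of the surviving constraints (e.g.\ $n^{g-1}k^{-hg}\asymp n^{-1/(g+1)}\to0$ at threshold) --- but that joint count over shared vertices \emph{and} lost constraints is precisely the technical content of the theorem, and your proposal asserts the wrong decay rate and defers the verification to ``a careful case split.'' So the gap is real: the case analysis you flag as the principal obstacle cannot be closed by the uniform $k^{-s}$ bound you propose; it needs the rank-sensitive inequality above. A smaller point: in the $1$-statement regime $k=o(n^{g/(h(g+1))})$ does not force $k\to\infty$, so ``lower order since $k\to\infty$ in both threshold regimes'' is not available there; you instead need each degenerate term, of order $k^{V}n^{-c}$ with $c\ge1$, to vanish on its own under the stated hypothesis.
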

In transitioning to the case of additive bases, we first note, as in \cite{gllt}, that a single input probability for integer selection will cause edge effect issues.  For example, for $h=2$, since the only way to represent 1 as a 2-sum is as $1+0$, both 0 and 1 {\em must} be selected in order for 1 to be represented.  For this reason, we say that $\ca\subseteq[n]\cup\{0\}$ is a truncated $(\a,h)$ additive basis  if each integer in $[\a n,(h-\a)n]$ can be written as an $h$-sum of elements in $\ca$.
\begin{thm} (\cite{gllt}) 
If we choose elements of $\{0\}\cup[n]$ to be in $\ca$ with probability  
$$p=\sqrt[h]{\frac{
K\log n-K \log{\log{n}}+A_n}{n^{h-1}}},$$ where $K=K_{\a,h}=\frac{h!(h-1)!}{\a^{h-1}}$, 
then
$$\p(\ca\ is\ a\ truncated\ (\a,h)\ additive\ basis)\rightarrow\begin{cases} 0 & \mbox{if}\ A_n\rightarrow-\infty\\  1 & \mbox{if}\ A_n\rightarrow \infty \\ \exp\{- \frac{2\alpha}{h-1}e^{-A/K}\} & \mbox{if}\ A_n\rightarrow A\in{\mathbb R}\end{cases}.$$
\end{thm}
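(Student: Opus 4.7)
The plan is to set $X = X_n$ equal to the number of $j\in[\alpha n,(h-\alpha)n]$ that fail to admit an $h$-sum representation from $\ca$, and to prove $X\xrightarrow{d}\mathrm{Po}(\lambda)$ with $\lambda = \frac{2\alpha}{h-1}e^{-A/K}$ whenever $A_n\to A\in\mathbb{R}$. The divergent regimes then follow quickly: $A_n\to\infty$ forces $\e(X)\to 0$ and hence $\p(X=0)\to 1$, while $A_n\to-\infty$ forces $\e(X)\to\infty$, and $\mathrm{Var}(X)=o(\e(X)^2)$ (a consequence of the correlation estimates below) delivers $\p(X\ge 1)\to 1$ via Chebyshev. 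Writing $R_j$ for the number of unordered $h$-multisets of elements of $\ca$ summing to $j$ and $I_j=\mathbf{1}\{R_j=0\}$, so that $X=\sum_j I_j$, the two main tools are Janson's inequality applied to the increasing events $\{M\subseteq\ca\}$ and the method of factorial moments.

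First I would compute $\mu_j:=\e[R_j]$. The number of unordered $h$-multisets drawn from $\{0,1,\ldots,n\}$ summing to a given $j\in[\alpha n,n]$ is $(j^{h-1}/(h!(h-1)!))(1+o(1))$, the correction coming from tuples with repeated entries, so $\mu_j\sim p^h\,j^{h-1}/(h!(h-1)!)$. Inserting $p^h = (K\log n - K\log\log n + A_n)/n^{h-1}$ with $K=h!(h-1)!/\alpha^{h-1}$, the endpoint simplifies cleanly to
\[
\mu_{\alpha n} = \log n - \log\log n + A_n/K + o(1),
\]
so $e^{-\mu_{\alpha n}}\sim(\log n/n)\,e^{-A/K}$. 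The Janson correction $\Delta_j=\sum_{M\ne M',\,M\cap M'\ne\emptyset}\p(M\cup M'\subseteq\ca)$, dominated by pairs sharing a single element, satisfies $\Delta_j = O(n^{-1/h}(\log n)^{(2h-1)/h}) = o(1)$, so Janson's inequality gives $\p(R_j=0)\le e^{-\mu_j}(1+o(1))$, while FKG supplies the matching lower bound (using $\mu_j p^h = o(1)$ to control the log-expansion in $\log\prod_M(1-p^{|M|})$). Integers $j$ with $\min\{j-\alpha n,(h-\alpha)n-j\}\gg n/\log n$ satisfy $\mu_j > (1+\varepsilon)\log n$, so $e^{-\mu_j}=o(1/n)$ and they contribute $o(1)$ in total.

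Substituting $j=\alpha n + t$, the expansion $\mu_j\sim\mu_{\alpha n}(1+(h-1)t/(\alpha n))$ gives the geometric-type sum
\[
\sum_{t\ge 0}e^{-\mu_j}\sim e^{-\mu_{\alpha n}}\cdot\frac{\alpha n}{(h-1)\log n} = \frac{\alpha}{h-1}e^{-A/K},
\]
doubled by the symmetry $j\mapsto hn-j$, so $\e(X)\to\lambda$. The main obstacle is the second factorial moment $\e[X(X-1)]=\sum_{j\ne j'}\p(I_j I_{j'}=1)$: I would apply Janson to the combined multiset system for $j$ and $j'$, whose expected count is $\mu_j+\mu_{j'}$ and whose correction decomposes as $\Delta_j+\Delta_{j'}+\Delta^{\mathrm{cross}}_{j,j'}$, with the cross term (pairs $M,M'$ summing to $j,j'$ respectively with $M\cap M'\ne\emptyset$) of the same order as the intra-$j$ piece and hence $o(1)$. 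This yields $\p(I_j I_{j'}=1)\le(1+o(1))\p(I_j)\p(I_{j'})$; FKG supplies the matching lower bound, so $\e[X(X-1)]\to\lambda^2$. The same Janson argument applied to $k$ targets simultaneously gives $\e[X^{(k)}]\to\lambda^k$ for every $k\ge 1$, and the method of moments concludes $X\xrightarrow{d}\mathrm{Po}(\lambda)$, so $\p(X=0)\to e^{-\lambda}$.
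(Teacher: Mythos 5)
Your proposal is essentially correct as a sketch, but note first that this paper does not actually prove Theorem 1.8: the statement is quoted from \cite{gllt}, and the only proof given here is for the companion result, Theorem 1.9 (the case $h=2$ with $g$-fold representation). Measured against that argument, your route is genuinely different. You work directly with the family of representing $h$-multisets, using Janson's inequality (with $\Delta_j=O(n^{-1/h}(\log n)^{(2h-1)/h})=o(1)$) together with FKG to get $\p(I_j=1)=e^{-\mu_j+o(1)}$ uniformly, and you then push the same Janson/FKG bounds through all factorial moments to obtain the Poisson limit by the method of moments; this is well adapted to general $h\ge2$, where the representations of a fixed $j$ overlap and the representation count has no exact distributional form. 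The paper's proof of Theorem 1.9 instead exploits the special feature of $h=2$ that the $\lfloor j/2\rfloor$ representing pairs are disjoint, so the number of representations is exactly binomial; it estimates $\e(X)$ via binomial-tail and incomplete-gamma asymptotics and gets the Poisson approximation from the Stein--Chen method with positively related indicators (a coupling plus a correlation lemma), which buys two things your route does not: it handles $g$-fold representability, where Janson's inequality does not apply directly, and it yields explicit total-variation error bounds rather than only convergence of moments. Two small points to tidy in your write-up: the claim that $\min\{j-\alpha n,(h-\alpha)n-j\}\gg n/\log n$ forces $\mu_j>(1+\varepsilon)\log n$ is not right as stated (that would require $j-\alpha n$ of order $n$); what you actually need, and what your geometric sum already delivers, is that the total contribution beyond such $j$ is $o(1)$. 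Similarly, for $\sum_j e^{-\mu_j}\to\lambda$ you need additive $o(1)$ control of $\mu_j$ in the window $j-\alpha n=O(n\log\log n/\log n)$, not merely the multiplicative relation $\mu_j\sim\mu_{\alpha n}(1+(h-1)t/(\alpha n))$; this does hold, since the multiset count is $\frac{j^{h-1}}{h!(h-1)!}(1+O(1/j))$, but it deserves an explicit sentence.
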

Even though edge effects {\it can} be eliminated by considering modular additive bases, here we consider the truncated additive basis case, where the target sumset is reduced via the parameter $\a$ -- since we are using the same probability $p$ of selection.  The case $h=2$ is studied in greater detail in the next result, which addresses coverage of each sum $g$ times, and is which the main result of this paper.
\begin{thm} If we choose elements of $\{0\}\cup[n]$ to be in $\ca$ with probability  
$$p=\sqrt{\frac{\frac{2}{\a}\log n+(g-2)\frac{2}{\a} \log{\log{n}}+A_n}{n}},$$ then
$$\p(\ca\ is\ a\ truncated\ (\a,2,g)\ basis)\rightarrow\begin{cases} 0 & \mbox{if}\ A_n\rightarrow-\infty\\  1 & \mbox{if}\ A_n\rightarrow \infty \\ \exp\lc- {\frac{2\alpha}{(g-1)!}}e^{-A\a/2}\rc & \mbox{if}\ A_n\rightarrow A\in{\mathbb R}\end{cases},$$ where a $(\a,2,g)$ basis is one for which each integer in the target set can be written as a 2-sum in $g$ ways.
\end{thm}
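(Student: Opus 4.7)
\section*{Proof Proposal for Theorem 1.9}

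The plan is to parallel the argument behind Theorem 1.8 of \cite{gllt}, but generalized from first to $g$-fold coverage in the same way that Theorem 1.4 generalizes Theorem 1.2. For each $j\in[\alpha n,(2-\alpha)n]$ let $R_j$ be the number of unordered pairs $\{a_1,a_2\}$ with $a_1\le a_2$, $a_1,a_2\in\ca$, and $a_1+a_2=j$; let $Y_j=\mathbbm{1}[R_j<g]$ and put $W=\sum_j Y_j$. A truncated $(\alpha,2,g)$ basis is precisely the event $\{W=0\}$.

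First I would compute $\mathbb{E}[W]$. For $j\le n$ the number of unordered representations of $j$ is $\lfloor j/2\rfloor+1$; each pair with $a_1<a_2$ contributes $p^2$ and the diagonal pair (when $j$ is even) contributes $p$, so $\mathbb{E}[R_j]\sim (j/2)p^2$, and similarly $\mathbb{E}[R_j]\sim ((2n-j)/2)p^2$ for $j>n$. With the scaling of $p$, this gives $\mathbb{E}[R_j]\to\infty$ uniformly, so a binomial/Poisson tail estimate yields
\[
\mathbb{P}(R_j<g)\sim \frac{\mathbb{E}[R_j]^{g-1}}{(g-1)!}\,e^{-\mathbb{E}[R_j]}.
\]
Writing $j=\alpha n+s$ near the left endpoint and $j=(2-\alpha)n-s$ near the right, and using $\mathbb{E}[R_{\alpha n}]=\log n+(g-2)\log\log n+\alpha A_n/2+o(1)$, a short calculation (the $\alpha n/\log n$ contribution from the geometric sum in $s$ cancels against the $(\log n)^{g-1}/(\log n)^{g-2}$ factor) gives
\[
\mathbb{E}[W]\;\longrightarrow\;\frac{2\alpha}{(g-1)!}\,e^{-\alpha A/2}
\]
when $A_n\to A\in\mathbb{R}$, and the corresponding limits $0$ or $\infty$ when $A_n\to\pm\infty$. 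The case $A_n\to\infty$ then follows immediately from Markov's inequality.

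For $A_n\to A\in\mathbb{R}$, I would obtain $W\Rightarrow\mathrm{Poisson}(\lambda)$ with $\lambda=\frac{2\alpha}{(g-1)!}e^{-\alpha A/2}$ by the Chen--Stein method. For each $j$ define the dependency neighborhood $B_j=\{j'\,:\,|j-j'|\le M\}$ for some slowly growing $M$ (e.g.\ $M=(\log n)^{10}$); events $Y_j$ and $Y_{j'}$ with $j'\notin B_j$ are \emph{not} independent (they can share a common summand), but the overlap is small enough to control via a second dependency decomposition, as was done in \cite{gllt}. The two Chen--Stein sums are
\[
b_1=\sum_j\sum_{j'\in B_j}\mathbb{P}(Y_j=1)\mathbb{P}(Y_{j'}=1),\qquad b_2=\sum_j\sum_{j'\in B_j\setminus\{j\}}\mathbb{P}(Y_j=Y_{j'}=1);
\]
$b_1$ is $O(M\cdot(\mathbb{E}W)^2/n)=o(1)$, while $b_2$ requires estimating $\mathbb{P}(R_j<g,R_{j'}<g)$, which I would do by a Janson/FKG-style inclusion bound that splits shared and disjoint representations and exploits that $(R_j,R_{j'})$ restricted to disjoint representations remains roughly Poisson. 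This yields $\mathbb{P}(W=0)\to e^{-\lambda}$, giving the stated limit. Finally, for $A_n\to-\infty$, the second moment bound above plus Chebyshev gives $\mathbb{P}(W\ge 1)\to 1$; alternatively one restricts to a thinned arithmetic progression of $j$'s with spacing larger than $M$ so that the corresponding $Y_j$'s are jointly close to independent, and applies the Chen--Stein bound directly.

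The main obstacle will be Step 3: the bound on $b_2$, and in particular controlling $\mathbb{P}(R_j<g,R_{j'}<g)$ when $|j-j'|$ is moderate. The events are positively correlated through shared summands, and one needs to quantify that the ``extra'' representations unique to each side still behave like independent Poissons with the right means so that the joint tail factorizes asymptotically. Once this correlation estimate is in place -- following the template established for the $g=1$ case in \cite{gllt} and combined with the Poisson-tail refinement used in the coupon-collector-type Theorem 1.4 -- the rest of the proof is the routine bookkeeping sketched above.
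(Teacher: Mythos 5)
Your first-moment calculation is sound and matches the paper's: the same Poisson/binomial tail estimate $\mathbb{P}(R_j<g)\sim \frac{(\mathbb{E}R_j)^{g-1}}{(g-1)!}e^{-\mathbb{E}R_j}$ with $\mathbb{E}R_{\alpha n}=\log n+(g-2)\log\log n+\alpha A_n/2+o(1)$ produces $\mathbb{E}W\to\frac{2\alpha}{(g-1)!}e^{-\alpha A/2}$, and the $A_n\to\infty$ case by Markov is exactly the paper's argument. The genuine gap is in your Step 3, and it is not just ``routine bookkeeping left to do.'' First, the dependency-neighborhood (Arratia--Goldstein--Gordon) framing with $B_j=\{j':|j-j'|\le M\}$ misdescribes the dependence structure: two indices $j,j'$ share a summand whenever some $a$ lies in both ranges, which happens for essentially \emph{all} pairs in $[\alpha n,(2-\alpha)n]$, not just pairs within distance $M$. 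So outside $B_j$ you do not have independence, the $b_3$ (long-range conditional) term is nonzero for every pair, and nothing in your sketch controls it; the parenthetical appeal to ``a second dependency decomposition'' is precisely the missing proof. Second, the tool you name for the joint tail, an FKG-style bound, points the wrong way: $\{R_j<g\}$ and $\{R_{j'}<g\}$ are both decreasing events in the selection indicators, hence positively correlated, so FKG gives a \emph{lower} bound on $\mathbb{P}(R_j<g,\,R_{j'}<g)$, whereas what the argument needs is an \emph{upper} bound of the form $\mathbb{P}(R_j<g)\,\mathbb{P}(R_{j'}<g)\left(1+O\!\left(\tfrac{1}{np}\right)\right)$ valid for all pairs.

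The paper resolves exactly this point differently: it does not use neighborhoods at all, but the positively-related (monotone coupling) form of the Stein--Chen method (Corollary 2.C.4 of Barbour--Holst--Janson). Because deselecting elements to force $I_j=1$ can only increase the other indicators, the total variation bound reduces to $\frac{1-e^{-\lambda}}{\lambda}\left(\mathrm{Var}(X)-\lambda+2\sum_j\mathbb{P}(I_j=1)^2\right)$, i.e.\ to a single pairwise correlation estimate. That estimate is the paper's Lemma 2.1, proved by splitting the joint event into representations of $j$ and $\ell$ that are entirely disjoint (contributing at most the product of the marginals) and those sharing at least one chosen summand (contributing a relative $O(1/(np))$), which yields $\mathbb{P}(I_jI_\ell=1)\le\mathbb{P}(I_j=1)\mathbb{P}(I_\ell=1)\left(1+\tfrac{K}{np}\right)$ and hence a TV bound of order $\left(\tfrac{\alpha np^2}{2}\right)^{g-1}e^{-\alpha np^2/2}+\tfrac{K\lambda}{np}\to0$; the same pairwise bound is what would make your second-moment/Chebyshev route work for $A_n\to-\infty$. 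In short, the piece you defer as ``the main obstacle'' is the heart of the proof, and your proposed instruments (local neighborhoods, FKG) would not deliver it; you would need either the coupling/positive-relation argument or a direct analogue of Lemma 2.1.
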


Both Theorems 1.8 and 1.9  are finite representability versions of the key result in \cite{et}, where a variable input probability was used and the focus was on representing each integer as a sum in {\it logarithmically many} ways.  See also the key results in \cite{gglz}, where logarithmic representability is studied in the context of a single input probability.  Theorem 1.9 exhibits the $\log\log$ phenomenon that arose in the context of Coupon Collection.  Interestingly, though, the $\log\log$ factor is present for the first covering with a negative contribution, disappears for the second, and then reappears with a positive sign.  The paper \cite{gghk} provides many more examples of this phenomenon in a variety of covering and packing situations, specifically those that arise in the context of combinatorial designs, permutations, and union free set families.
\section{Proof of Theorem 1.9}  We start by defining the key random variable of interest.  Let $X=X_g$ be the number of integers in $[\a n, (2-\a)n]$ that are represented as a $2$ sum in fewer than $g$ ways, i.e., in $\le g-1$ ways.  The threshold we seek to establish is for $\p(X=0)$, and, as in so many instances where we employ the Poisson paradigm (see, e.g., \cite{as}), this transition occurs at the level at which $\e(X)$ rapidly transitions from asymptotically 0 to asymptotically $\infty$; this is because $\p(X=0)\sim e^{-\e(X)}=e^{-\l}$.  Towards this end we next carefully estimate $\l$.  We have that
\[X=\sum_{j=\a n}^{(2-\a)n}I_j,\]
where $I_j$ is the indicator of the event that the integer $j$ is underrepresented as defined above.  By linearity of expectation,
\begin{eqnarray}\l=\e(X)&=&\sum_{j=\a n}^{(2-\a)n}\p(j\ {\rm is\ underrepresented})\nonumber\\
&\sim&2\sum_{j=\a n}^n\p(j\ {\rm is\ underrepresented})\nonumber\\
&=&2\sum_{j=\a n}^n\sum_{s=0}^{g-1}{{\lfloor j/2\rfloor}\choose{s}}p^{2s}(1-p^2)^{\lfloor j/2\rfloor-s},\end{eqnarray}
where the last equality follows from the fact that each $j$ can be represented as the sum of $\lfloor j/2\rfloor$ {\em disjoint} pairs of integers.   
Trivially, we have
\begin{equation}\l\ge2\sum_{j=\a n}^n{{\lfloor j/2\rfloor}\choose{g-1}}p^{2g-2}(1-p^2)^{\lfloor j/2\rfloor-g+1},\end{equation}
and Theorem A.2.5 (iii) in \cite{bhj}, which estimates the left tail of a binomial random variable with large mean by its last term yields
\begin{eqnarray}\l&\le& 2\sum_{j=\a n}^n\frac{{jp^2}/{2}-gp^2}{jp^2/2+1-g-p^2}{{\lfloor j/2\rfloor}\choose{g-1}}p^{2g-2}(1-p^2)^{\lfloor j/2\rfloor-g+1}(1+o(1))\nonumber\\
&=&2\sum_{j=\a n}^n{{\lfloor j/2\rfloor}\choose{g-1}}p^{2g-2}(1-p^2)^{\lfloor j/2\rfloor-g+1}(1+o(1)).\end{eqnarray} In deriving (3), we need to know that $jp^2\to\infty$ for $j$'s in the selected range.  This is something we can assume, since we are seeking a threshold at $p\sim{\sqrt{K\log n/n}}$, and we can suppose up front, e.g., that $p\ge{\sqrt{\log\log n/n}}$.  Equations (2) and (3) reveal that
\begin{eqnarray}\l&\sim&2\sum_{j=\a n}^n{{\lfloor j/2\rfloor}\choose{g-1}}p^{2g-2}(1-p^2)^{\lfloor j/2\rfloor-g+1}\nonumber\\
&\sim&\frac{2}{(g-1)!}\sum_{j=\a n}^n\lr\frac{jp^2}{2}\rr^{g-1}e^{-jp^2/2}
\end{eqnarray}
where, in the second line of (4) we have used the facts that $p\to0$ and $g$ is finite, and that for $j$'s in the specified range, we have $(1-p^2)^{\lfloor j/2\rfloor}\sim e^{-jp^2/2}$, and ${{\lfloor j/2\rfloor}\choose {g-1}}\sim j^{g-1}/(2^{g-1}(g-1)!)$.  Since the function $x^{g-1}e^{-x}$ is decreasing for $x>g-1$, we see that the summand in (4) will also be decreasing provided, e.g., that $p^2\ge \frac{\log\log n}{\a n}$, which we will assume.  Thus
\begin{eqnarray}
\l&\le&\frac{2}{(g-1)!}\sum_{j=\a n}^\infty \lr\frac{jp^2}{2}\rr^{g-1}e^{-jp^2/2}\nonumber\\
&\le&\frac{4}{p^2(g-1)!}\int_{\a np^2/2}^\infty x^{g-1}e^{-x}dx+o(1)\nonumber\\
&\sim&\frac{4}{p^2(g-1)!}\lr\frac{\a np^2}{2}\rr^{g-1}\exp\{-\a np^2/2\},
\end{eqnarray}
where the last line of (5) follows from the simplest asymptotic estimate (i.e., without error terms) of the incomplete gamma function.  We first check to see when $\l\to 0$.  We start by letting $p^2=2\log n/(\a n)$ and find that the right side of (5) is of order
\[\frac{n}{\log n}(\log n)^{g-1}\frac{1}{n}\not\to0\]  for $g\ge 2$.  Making the adjustment $p^2=(2+\ep)\log n/(\a n)$ reveals that the right side of (5) tends to zero at the rate
\[\frac{n}{\log n}(\log n)^{g-1}\frac{1}{n^{1+(\ep/2)}},\] which we seek to improve.  Accordingly, we set 
\[p^2=\frac{2\log n+B\log\log n}{\a n},\]and find that the right side of (5) is of order
\[\frac{n}{\log n}(\log n)^{g-1}\frac{1}{n(\log n)^{B/2}}.\]
Setting $B=2(g-2)$ yields that the right side of (5) is constant, and the incorporation of an additional $A_n$ term, with $A_n\to\infty$, yields $\e(X)\to0$ and leads to the conclusion that
\[\p(X\ge 1)\le\e(X)\to0.\]  Next, we return to (4) and see that
\begin{eqnarray}
\l&\ge&\frac{4}{p^2(g-1)!}\int_{\a np^2/2}^{np^2/2} x^{g-1}e^{-x}dx\nonumber\\
&=&\frac{4}{p^2(g-1)!}\int_{\a np^2/2}^{\infty} x^{g-1}e^{-x}dx-\frac{4}{p^2(g-1)!}\int_{np^2/2}^{\infty} x^{g-1}e^{-x}dx\nonumber\\
&\sim&\frac{4}{p^2(g-1)!}\lr\lr\frac{\a np^2}{2}\rr^{g-1}\exp\{-\a np^2/2\}-\lr\frac{np^2}{2}\rr^{g-1}\exp\{-np^2/2\}\rr\nonumber\\
&=&\frac{4}{p^2(g-1)!}\lr\frac{\a np^2}{2}\rr^{g-1}\exp\{-\a np^2/2\}\lr1-\lr\frac{1}{\a}\rr^{g-1}e^{-(1-\a)np^2/2}\rr\nonumber\\
&\sim&\frac{4}{p^2(g-1)!}\lr\frac{\a np^2}{2}\rr^{g-1}\exp\{-\a np^2/2\}.
\end{eqnarray}
It follows, as with the analysis concerning the $\l\to0$ case, that $\l\to\infty$ when
\[p=p=\sqrt{\frac{\frac{2}{\a}\log n+(g-2)\frac{2}{\a} \log{\log{n}}+A_n}{n}},\]
with $A_n\to-\infty$.

The next (and critical) phase of the proof is to show that $\p(X=0)\approx e^{-\l}$.  We will exhibit this by using the Stein-Chen method of Poisson approximation \cite{bhj}, which will yield that
\[\tv(\cl(X),\P(\l)):=\sup_{A\subseteq{\mathbb Z}^+}\left\vert\p(X\in A)-\sum_{j\in A}\frac{e^{-\l}\l^j}{j!}\right\vert\to0\]
for a range of $p$'s that encompasses our threshold.  (In the above $\cl(Z)$ denotes the distribution of $Z$, $\P(\l)$ the Poisson distribution with parameter $\l$, and $\tv$ the usual total variation distance.)  Setting $A=\{0\}$ will complete the proof.
 
For each $j$ we seek to define an ensemble of auxiliary variables $J_{ji}; \a n\le i\le (2-\a)n$ that satisfy, for each $j$,
\[\cl(J_{ji}: \a n\le i\le (2-\a)n)=\cl(I_i,  \a n\le i\le (2-\a)n\vert I_j=1).\]
We do this as follows:  If $I_j=1$, we simply set $J_{ji}=I_i$ for each $i$.  If, however, the integer $j$ is represented $g$ or more times, we deselect one or more integer so as to achieve the distribution corresponding to $I_j=1$.  We then set $J_{ji}=1$ if integer $i$ is represented $g-1$ or fewer times after the deselection.  Now the exact nature of this {\em coupling} is not important (and, in fact, is rather complicated), but what is evident is that $J_{ji}\ge I_i$, since $J_{ji}=0; I_i=1$ would entail that $i$ flipped from being underrepresented to being represented $\ge g$ times {\em after} the deselection of some integers.  In other words, the indicator variables are {\em positively related} and Corollary 2.C.4 in \cite{bhj} applies, so that we have 
\begin{eqnarray}
\tv(\cl(X),\P(\l))&\le&\frac{1-e^{-\l}}{\l}\lr Var(X)-\l+2\sum_j\p^2(I_j=1)\rr\nonumber\\
&\le&\frac{1}{\l}\lr\sum_j\p^2(I_j=1)+\sum_j\sum_\ell\lc\e(I_jI_\ell)-\e(I_j)\e(I_\ell)\rc\rr\nonumber\\
&=&T_1+T_2, \quad{\rm say}.\end{eqnarray}
We begin with $T_1$.
\begin{eqnarray}
T_1&\le&\frac{1}{\l}\max_j\p(I_j=1)\sum_j\p(I_j=1)\nonumber\\
&=&\p(I_{\a n}=1)\nonumber\\
&=&\sum_{j=0}^{g-1}{{\lfloor \a n/2\rfloor}\choose {j}}p^{2j}(1-p^2)^{{\lfloor \a n/2\rfloor}-j}\nonumber\\
&\le&{{\lfloor \a n/2\rfloor}\choose {g-1}}p^{2g-2}(1-p^2)^{{\lfloor \a n/2\rfloor}-g+1}(1+o(1))\nonumber\\
&\le&\frac{1}{(g-1)!}\lr\frac{\a np^2}{2}\rr^{g-1}e^{-\a np^2/2}(1+o(1))\nonumber\\
&\to&0
\end{eqnarray}
provided that $np^2\to\infty$, which we may assume without any loss.  Clearly the correlation term $T_2$ will dictate the closeness of the Poisson approximation.  Our first lemma shows that while computing $\p(I_jI_\ell=1)$, it suffices to consider the case where the sumsets for $j$ and $\ell$ are disjoint.
\begin{lem}For some constant $K$, we have that for each $j,\ell$, \[\p(I_jI_\ell=1)\le\p(I_j=1)\p(I_\ell=1)\lr1+\frac{K}{np}\rr.\]
\end{lem}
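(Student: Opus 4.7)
The plan is to compute $\p(I_j I_\ell = 1) = \p(N_j \le g-1,\ N_\ell \le g-1)$ by expanding over exact selections from the pair-sets $P_j := \{\{a, j-a\}: 0 \le a \le \lfloor j/2 \rfloor\}$ and $P_\ell$, where $N_j$ counts pairs in $P_j$ with both elements in $\ca$. The crucial structural observation is that pairs within $P_j$ are mutually vertex-disjoint (and likewise within $P_\ell$), while each pair in $P_j$ shares a vertex with at most two pairs in $P_\ell$. Thus the only dependence between $N_j$ and $N_\ell$ arises from a small amount of cross-set vertex overlap.

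I would write
\[
\p(I_j I_\ell = 1)=\sum_{s,s' \le g-1}\ \sum_{\substack{T_j \subseteq P_j,\ |T_j|=s \\ T_\ell \subseteq P_\ell,\ |T_\ell|=s'}} \p\bigl(\text{exactly } T_j \text{ and } T_\ell \text{ are selected}\bigr),
\]
with the dominant contribution at $s = s' = g-1$ (lower $s,s'$ terms are handled identically), and split the inner sum according to whether $V(T_j) \cap V(T_\ell) = \emptyset$ (the \emph{disjoint} case) or not (the \emph{overlapping} case). In the disjoint case, the probability that the required $4(g-1)$ distinct elements all lie in $\ca$ is $p^{4(g-1)}$, and the conditional ``no other pair is selected'' factor is a product of $(1-p^2)$ terms over unaffected pairs and $(1-p)$ terms over the at most $O(g)$ pairs in $P_j \setminus T_j$ or $P_\ell \setminus T_\ell$ containing a vertex of $V(T_j \cup T_\ell)$. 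Since $1-p \le 1-p^2$, the total disjoint contribution is bounded above by $\binom{m_j}{g-1}\binom{m_\ell}{g-1}p^{4(g-1)}(1-p^2)^{m_j+m_\ell-2(g-1)}$, which matches $\p(I_j=1)\p(I_\ell=1)$ to leading order.

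The overlapping case drives the correction. If $|V(T_j) \cap V(T_\ell)| = k \ge 1$, the probability that all required elements lie in $\ca$ is $p^{4(g-1)-k}$, a gain of $p^{-k}$; but the count of such $(T_j, T_\ell)$ shrinks, because once $T_j$ is fixed and $k$ vertices of $V(T_j)$ are designated as shared (in $\binom{2(g-1)}{k}=O(1)$ ways), each shared vertex $a$ forces the unique $P_\ell$-pair $\{a, \ell-a\}$ into $T_\ell$, reducing the free choices in $T_\ell$ from $\binom{m_\ell}{g-1}$ to $\binom{m_\ell-k}{g-1-k}$, a ratio of order $\bigl((g-1)/m_\ell\bigr)^k$. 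Combining the gain and the loss, each shared vertex contributes a net factor of order $1/(m_\ell p) = O(1/(np))$, and summing the geometric series over $k \ge 1$ bounds the overlapping contribution by $O(1/(np))$ times the disjoint main term.

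The hard part will be uniformly controlling the ``no other pair is selected'' conditional factor once vertices of $V(T_j \cup T_\ell)$ appear as endpoints of pairs outside $T_j \cup T_\ell$, and ruling out forbidden configurations where such an outside pair already has both endpoints forced into $\ca$. Because each element of $\{0\} \cup [n]$ lies in at most one pair of $P_j$ and one pair of $P_\ell$, only $O(g)$ such modified factors $(1-p)$ (versus $(1-p^2)$) ever arise, and they only \emph{decrease} the probability, so they are harmlessly absorbed into the upper bound above. Combining the disjoint and overlapping contributions then yields $\p(I_j I_\ell = 1) \le \p(I_j=1)\p(I_\ell=1)\bigl(1 + K/(np)\bigr)$, as required.
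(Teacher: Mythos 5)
Your proposal is correct and follows essentially the same route as the paper's proof: split $\p(I_jI_\ell=1)$ according to whether the representing pairs of $j$ and $\ell$ are vertex-disjoint, bound the disjoint part by $\p(I_j=1)\p(I_\ell=1)$ using the $(1-p)$-versus-$(1-p^2)$ and reduced-binomial-coefficient comparisons, and show each shared vertex (the paper's parameter $t$, your $k$) trades a factor $p^{-1}$ against a counting loss of order $g/n$, so the overlapping contribution is a geometric series summing to $O\lr\frac{1}{np}\rr$ times the main term. No substantive difference from the paper's argument.
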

\begin{proof}We let $A_{j,\ell,r,s}$ denote the event that integers $j, \ell$ are represented $r$ and $s$ times respectively.  Likewise, let $B_{j,\ell,r,s}$ denote the event that integers $j, \ell$ are represented in $r$ and $s$ entirely disjoint ways.  \begin{eqnarray}
\p(I_jI_\ell&=&1)\nonumber\\&=&\sum_{r,s=0}^{g-1}\p(A_{j,\ell,r,s})\nonumber\\
&=&\sum_{r,s=0}^{g-1}\p(B_{j,\ell,r,s})+\sum_{r,s=0}^{g-1}\p(B^C_{j,\ell,r,s})
\end{eqnarray}  We first calculate the contribution to (9) of the disjoint case:
\begin{eqnarray}
&&\sum_{r,s=0}^{g-1}\p(B_{j,\ell,r,s})\nonumber\\
&=&\sum_{r=0}^{g-1}{{\lfloor j/2\rfloor}\choose{r}}p^{2r}(1-p^2)^{\lfloor j/2\rfloor-r}\sum_{s=0}^{g-1}{{\lfloor \ell/2\rfloor-D}\choose{s}}p^{2s}(1-p^2)^{\lfloor \ell/2\rfloor-D-s}(1-p)^D\nonumber\\
&\le&\sum_{r=0}^{g-1}{{\lfloor j/2\rfloor}\choose{r}}p^{2r}(1-p^2)^{\lfloor j/2\rfloor-r}\sum_{s=0}^{g-1}{{\lfloor \ell/2\rfloor}\choose{s}}p^{2s}(1-p^2)^{\lfloor \ell/2\rfloor-s}\lr\frac{1-p}{1-p^2}\rr^D\nonumber\\
&\le&\p(I_j=1)\p(I_\ell=1).
\end{eqnarray}
In the above array, we have denoted by $D$ the number of $\ell$-sumsets that have an overlap with the chosen $j$-sumsets, where $0\le D\le 2r$.  
We must not choose the second of the two integers that give the $\ell$ sumset; this explains the $(1-p)^D$ term.  

We next turn to $\sum_{r,s=0}^{g-1}\p(B^C_{j,\ell,r,s})$, and see that
\begin{eqnarray}
\sum_{r,s=0}^{g-1}\p(B^C_{j,\ell,r,s})&=&
\sum_{r=0}^{g-1}{{\lfloor j/2\rfloor}\choose{r}}p^{2r}(1-p^2)^{\lfloor j/2\rfloor-r}\sum_{s=0}^{g-1}
\sum_{t=1}^{D\wedge s}{D\choose t}p^t(1-p)^{D-t}\times\nonumber\\&&{{\lfloor \ell/2\rfloor-D}\choose{s-t}}p^{2s-2t}(1-p^2)^{\lfloor \ell/2\rfloor-s-D+t}\nonumber\\
&=&\sum_{r=0}^{g-1}{{\lfloor j/2\rfloor}\choose{r}}p^{2r}(1-p^2)^{\lfloor j/2\rfloor-r}\sum_{s=0}^{g-1}p^{2s}(1-p^2)^{\lfloor \ell/2\rfloor-s}\times\nonumber\\
&&\sum_t{D\choose t}{{\lfloor\ell/2\rfloor-D}\choose{s-t}}\frac{(1-p)^{D-t}(1-p^2)^{t-D}}{p^{t}}\nonumber\\
&\le&\sum_{r=0}^{g-1}{{\lfloor j/2\rfloor}\choose{r}}p^{2r}(1-p^2)^{\lfloor j/2\rfloor-r}\sum_{s=0}^{g-1}p^{2s}(1-p^2)^{\lfloor \ell/2\rfloor-s}\times\nonumber\\
&&\sum_{t=1}^{D\wedge s}\frac{{D\choose t}{{\lfloor\ell/2\rfloor-D}\choose{s-t}}}{p^{t}}.
\end{eqnarray}
Consider the summand $\ph(t)$ in the third sum in (11).  We see, on ignoring $\Theta(1)$ terms, that
\[\frac{\ph(t+1)}{\ph(t)}=\frac{1}{p}\frac{1}{\lfloor \ell/2\rfloor-D-s+(t+1)}=\Theta\lr\frac{1}{np}\rr(\to0),\]
so that $\ph$ is decreasing in $t$.  It follows that
\[\sum_{t=1}^{D\wedge s}\frac{{D\choose t}{{\lfloor\ell/2\rfloor-D}\choose{s-t}}}{p^{t}}\le
\frac{D^2\cdot{{\lfloor\ell/2\rfloor-D}\choose{s-1}}}{p}\le\frac{D^2{{\lfloor\ell/2\rfloor}\choose{s-1}}}{p}=O\lr\frac{{{\lfloor\ell/2\rfloor}\choose{s}}}{np}\rr,\]
and thus, by (11)
$$\sum_{r,s=0}^{g-1}\p(B^C_{j,\ell,r,s})\le O\lr\frac{\p(I_j=1)\p(I_\ell=1)}{np}\rr.$$
Equation (9) thus yields, for some constant $K$,
\[\p(I_jI_\ell=1)\le\p(I_j=1)\p(I_\ell=1)\lr1+\frac{K}{np}\rr.\] This proves Lemma 2.1.\hfill\end{proof}  Returning to (7), using (8), we see that for another constant $L$,
\begin{eqnarray}
\tv(\cl(X),\P(\l))&\le&L\lr\frac{\a np^2}{2}\rr^{g-1}e^{-\a np^2/2}+\frac{K}{\l np}\sum_j\p(I_j=1)\sum_\ell \p(I_\ell=1)\nonumber\\
&=&L\lr\frac{\a np^2}{2}\rr^{g-1}e^{-\a np^2/2}+\frac{K\l}{np}.
\end{eqnarray}
Thus $X$ may be approximated by a Poisson random variable provided that $np^2\to\infty$ and $\l\ll np$.  The first condition may be seen to hold if, e.g.,
\[p\gg{\sqrt{\frac{\log\log n}{n}}},\]
and the second if the $\l$ given by (6) is (roughly speaking) of order smaller than $np\sim\sqrt{n\log n}$.  We have thus established Theorem 1.9 for a range of $p$'s that spans part of the $\l\to0$ and $\l\to\infty$ regimes; the full theorem, including the delicated behavior at the threshold, follows easily by monotonicity  (e.g., if $\l$ is even larger than $np$ then it is even less likely that $\p(X=0)$, so that this quantity tends to zero as well).


\section{Open Questions} Establishing an analog of Theorem 1.9 for $h\ge3$ would, of course, be of great interest.  Combating the fact that $h$ sums are not disjoint is the main technical hurdle we would need to overcome.  

Our second open problem is related to an original question of Sidon; see, e.g., \cite{o}, and pertains more to Theorem 1.7.  It has been suggested by Kevin O'Bryant.  Sidon's original question was ``How thick can a set $A\subseteq{\mathbb Z}^+$ be if 
\[\sigma(n)=\vert\{(a,b): a+b\in A; a+b=n\}\vert\]
and
\[\delta(n)=\vert\{(a,b): a-b\in A; a-b=n\}\vert\]
satisfy, for each $n$, $\sigma(n)+\delta(n)\le g$."  Note that in this ordered set format, Sidon sets are those for which $\sigma(n)\le 2$ for each $n$.  It is easy to verify that $\sigma(n)\le 2\ {\rm iff}\ \delta(n)\le 1$.  But if $\sigma(n)\le 4$ then it is still possible for $\delta(n)$ to be unbounded.  Sidon's original question has not been the subject of a large-scale investigation.  In our context, however, we might ask for thresholds for the property $\sigma(n)+\delta(n)\le g$.

\section{Acknowledgments} The research of all four authors was supported by NSF Grant 13XXXX

\end{document}